\theoremstyle{plain}
\theoremstyle{plain}
\newtheorem{theorem}{Theorem}[section]
\newtheorem{proposition}[theorem]{Proposition}
\newtheorem{axiom}[theorem]{Axiom}
\theoremstyle{definition}
\newcommand{\nc}{\newcommand}
\nc{\dmo}{\DeclareMathOperator}
\nc{\Q}{\mathbb{Q}}
\nc{\R}{\mathbb{R}}
\nc{\Z}{\mathbb{Z}}
\nc{\N}{\mathbb{N}}
\nc{\C}{\mathbb{C}}
\nc{\cS}{\mathcal{S}}
\nc{\iso}{\cong}
\dmo{\Mod}{Mod}
\dmo{\Diff}{Diff}
\dmo{\Homeo}{Homeo}
\dmo{\dist}{dist}
\dmo\BDiff{BDiff}
\dmo\SO{SO}
\dmo\slide{sl}
\dmo\im{im}
\dmo\id{id}
\dmo\Fix{Fix}
\dmo\Out{Out}
\dmo{\T}{\mathcal{T}}
\dmo{\Te}{\mathcal{T}^{\epsilon}}
\dmo{\M}{\mathcal{M}}
\dmo{\Me}{\mathcal{M}^{\epsilon}}
\renewcommand{\epsilon}{\varepsilon}
\nc{\coloneq}{\mathrel{\mathop:}\mkern-1.2mu=}
\nc{\margin}[1]{\marginpar{\scriptsize #1}}
\nc{\para}[1]{\bigskip\noindent\textbf{#1}}
\title [Equidistribution along random walks]{Equidistribution of closed geodesics along random walk trajectories with respect to the harmonic invariant measure}
\author{Ilya Gekhtman}
\begin{document}
\begin{abstract}
We prove that for suitable random walks on isometry groups of $CAT(-1)$ spaces, typical sample paths eventually land on loxodromic elements which equidistribute with respect to a flow invariant measure on the unit tangent bundle of the quotient space.
\end{abstract}
\maketitle

\section{Introduction and statement of results}

Let $(X,d)$ be a $CAT(-1)$ space and $G<Isom(X)$ a nonelementary discrete subgroup.
Let $M=X/G$.
Let $$T^{1}X=\partial^{2}X\times \mathbb{R}$$ and 
$$T^{1}M=T^{1}X/G$$ the unit tangent bundles of $X$ and $m$ respectively.
Let $\pi:T^{1}X\to X$ be the canonical projection.
Let $\tilde{g_{t}}$ be the geodesic flow on $T^{1}X$ and $g_{t}$ the geodesic flow on $T^{1}M$. 
Let $\tilde{d}_{T}$ be an $Isom(X)$ invariant metric on $T^{1}X$ satisfying 
  $$c^{-1}d(x,y)\leq \inf_{p\in \pi^{-1}x,q\in \pi^{-1}y}\tilde{d}_{T}(p,q)\leq c d(x,y).$$
For $x\in X$ and $\zeta \in X\cup \partial X$ let $\gamma_{x,\zeta}$ be the unit speed geodesic ray from $x$ in the direction of $\zeta$, considered as a subset of $T^{1}X$.  
The translation length of $g\in G$ is defined as
$l(g)=\inf_{x\in X}d(x,gx).$
If $l(g)>0$ then the infimum is realized and $g$ is called a loxodromic isometry of $X$. 
The set of points in $X$ realizing the infimum is a geodesic $s_{g}$ in $X$, and can be naturally considered a subset of $T^{1}X$.
Let $\gamma=\gamma_{g}$ be the associated unit speed closed geodesic on $M=X/G$ and $D_{\gamma}=D_{g}$ the arclength Lebesgue measure on $\gamma_{g}$, considered as a measure on $T^{1}M$ normalized to unit mass. 
The Bowen-Margulis measure $m$ on $T^{1}M$ is the geodesic flow invariant measure corresponding to the Patterson-Sullivan geodesic current on $\partial X \times \partial X$ (see \cite{Roblin} for details).

Roblin \cite{Roblin} proved the following equidistribution result for  closed geodesics on $M$ with respect to $m$. 
More precisely, assume
that $m$ is finite, and normalized to have unit mass. Let $L_{R}$ be the set of closed geodesics on $M$ with length at most $R$.

Then the measures on $T^{1}M$ defined by
$$\mathcal{L}_{R}=\frac{1}{hR}\sum_{\gamma \in L_{R}}D_{\gamma}$$ weakly converge to $m$ as $R\to \infty$.

In this note, we prove a different sort of equidistribution result: along  typical trajectories of random walks on $G$, closed geodesics associated to loxodromic elements equidistribute with respect to the "harmonic invariant measure."

Let $\mu$ be a probability measure on $G$ whose support generates $G$ as a semigroup.
Let $\hat{mu}(g)=\mu(g^{-1})$ be the reflected measure of $\mu$.

Let $\mu^{\mathbb{N}}$ be the product measure on $G^{\mathbb{N}}$.
Let $T:G^{\mathbb{N}}\to G^{\mathbb{N}}$ be the transformation that
takes the one-sided infinite sequence $(h_{i})_{i\in \mathbb{N}}$
to the sequence $(\omega_{i})_{i\in \mathbb{N}}$ with
$$\omega_{n}=h_{1}\cdots  h_{n}.$$

Let $P$ be the pushforward measure
$T_{*}\mu^{\mathbb{N}}$.

The measure $P$ describes the distribution of $\mu$
sample paths, i.e. of products of independent $\mu$-distributed increments.

The following results were proved by Maher-Tiozzo \cite{MT} in the more general setting of geodesic Gromov hyperbolic spaces.
\begin{proposition}
For any $x\in X$ and $P$ a.e.
sample path $\omega=(\omega_{n})_{n\in \mathbb{N}}$ of the random walk
on $(G, \mu)$, the sequence $(\omega_{n}x_0)_{n\in \mathbb{N}}$ converges
to a point $\omega_{+}=bnd \omega \in \partial X$. 
If in addition $\mu$ has finite first moment with respect to the metric
$(X,d)$, then there exists $L>0$ such that
for $P$-a.e. sample path $\omega$ {and for every $x_0\in X$} one has
$$\lim_{n\to \infty}\frac{d_{X}(x_{0},\omega_{n}x_{0})}{n}=L$$
The measure $\nu=bnd_{*}P$ is the unique $G$ stationary measure on $\partial X$.
\end{proposition}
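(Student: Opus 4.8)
\emph{Boundary convergence.} All three assertions are the standard boundary theory of $\mu$-random walks on a (weakly) hyperbolic space; the plan is to run the Furstenberg--Kaimanovich--Maher--Tiozzo scheme \cite{MT}, using that a $CAT(-1)$ space is Gromov hyperbolic to keep the contraction estimates cheap. First I would produce a $\mu$-stationary probability measure $\nu$ on the compact space $\partial X$ by a standard averaging argument (Markov--Kakutani), so $\nu=\sum_g\mu(g)\,g_*\nu$, and record that any such $\nu$ is non-atomic: otherwise the finite nonempty set of atoms of maximal mass would be invariant under the semigroup generated by $\mathrm{supp}\,\mu$, hence under $G$, contradicting nonelementarity (a nonelementary discrete subgroup has no finite orbit on $\partial X$). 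Next, for $P$-a.e.\ $\omega$ the measures $(\omega_n)_*\nu$ form a bounded martingale in the increment filtration, so they converge weakly to a random measure $\nu^\omega_\infty$ of mean $\nu$. The heart of the matter is that $\nu^\omega_\infty$ is almost surely a Dirac mass $\delta_{\omega_+}$ and that $\omega_nx_0\to\omega_+$. For this I would: (i) establish transience, $d(x_0,\omega_nx_0)\to\infty$ a.s., via the Maher--Tiozzo estimate that the walk's return probability to a fixed ball tends to $0$ (using only that $\mathrm{supp}\,\mu$ generates the nonelementary group $G$, with no moment condition); (ii) run the same construction for the reflected measure $\hat\mu$ to obtain, $P$-a.e., a backward limit $\check\omega_+=\lim_n\omega_n^{-1}x_0\in\partial X$, noting $\omega_+\ne\check\omega_+$ a.s.\ since the forward and backward hitting measures are non-atomic and put no mass on the diagonal of $\partial X\times\partial X$; (iii) invoke the hyperbolic contraction lemma: if $d(x_0,gx_0)$ is large while $(x_0\mid g^{-1}x_0)_{x_0}$ stays bounded, then $g$ carries the complement of a small neighborhood of $\lim g^{-1}x_0$ into a small neighborhood of $\lim gx_0$. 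Applying (iii) along $(\omega_n)$ with input (i)--(ii) and the non-atomicity of $\nu$ forces $\nu^\omega_\infty=\delta_{\omega_+}$ and $\omega_nx_0\to\omega_+$; the conclusion holds for every basepoint at once because Gromov products and orbit displacements change by at most a bounded amount under a change of basepoint.

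\emph{Positive linear drift.} On $(G^{\mathbb N},\mu^{\mathbb N})$ with the ergodic shift $S$, the quantity $c_n(\mathbf h)=d(x_0,h_1\cdots h_nx_0)$ is a subadditive cocycle, $c_{m+n}\le c_m+c_n\circ S^m$ (triangle inequality and $G$-invariance of $d$), with $\mathbb E[c_1]<\infty$ by the finite first moment hypothesis, so Kingman's subadditive ergodic theorem gives $c_n/n\to L:=\inf_n\mathbb E[c_n]/n\ge 0$ a.s.\ and in $L^1$, and $|c_n(x)-c_n(x_0)|\le 2d(x,x_0)$ makes $L$ independent of the basepoint. The one genuinely non-soft point, and the step I expect to be the main obstacle, is the strict positivity $L>0$: I would use nonelementarity to locate, inside the support of some convolution power $\mu^{*k}$, two loxodromic elements whose attracting and repelling fixed points lie in ping-pong position, then run a Borel--Cantelli argument along the subsequence $(\omega_{kn})$ to obtain a positive-probability linear lower bound $d(x_0,\omega_{kn}x_0)\ge c\,n$, which ergodicity and Kingman upgrade to $L>0$. (Equivalently, one quotes the ``linear progress with exponential decay'' of \cite{MT}, or the non-vanishing of the Karlsson--Ledrappier drift cocycle once $\nu$ is non-atomic and the walk is transient.)

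\emph{Uniqueness.} Given any $\mu$-stationary $\nu'$ on $\partial X$ --- again non-atomic by the maximal-atom argument --- I would write, for each $n$, $\nu'=\mu^{*n}*\nu'=\int_{G^{\mathbb N}}(\omega_n)_*\nu'\,dP(\omega)$, apply the contraction lemma of the first step (which used only non-atomicity of the pushed measure together with $\omega_nx_0\to\omega_+$ and $\omega_n^{-1}x_0\to\check\omega_+$) to get $(\omega_n)_*\nu'\to\delta_{\omega_+}$ weakly for $P$-a.e.\ $\omega$, and pass to the limit by bounded convergence to conclude $\nu'=\int_{G^{\mathbb N}}\delta_{\omega_+}\,dP(\omega)=bnd_*P=\nu$. \bx
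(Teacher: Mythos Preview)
The paper does not give its own proof of this proposition: it is stated as a quotation of results of Maher--Tiozzo \cite{MT} (in the more general setting of Gromov hyperbolic spaces), with no argument supplied. Your proposal is therefore not competing against any proof in the paper; rather, you have written out a correct sketch of essentially the argument that \cite{MT} itself carries out --- stationary measure via averaging, non-atomicity from nonelementarity, martingale convergence of $(\omega_n)_*\nu$, contraction to a Dirac mass, Kingman for the drift, and the contraction/bounded-convergence argument for uniqueness. The one place where your outline is slightly informal is the positivity $L>0$: your ping-pong/Borel--Cantelli sketch is along the right lines but would need more care to turn into a full proof (this is exactly where \cite{MT} does real work via their ``linear progress with exponential decay'' estimate), so if you want a self-contained argument that is the step to flesh out.
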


Moreover, Tiozzo proved that sample paths sublinearly track geodesics.
\begin{proposition}\label{sublinear}
\cite{Tiozzo}
Assume $\mu$ has finite first moment with respect to $(X,d)$. 
For $P$-a.e. sample path $\omega$ and every $x\in X$ one has
$$\frac{d(\omega_{n}x,\gamma_{x,\omega_{+}})}{n}\to 0.$$
\end{proposition}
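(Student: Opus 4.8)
The plan is to follow the strategy of Karlsson and Ledrappier, using the hyperbolicity of $X$ to identify the compensating horofunction produced by their subadditive cocycle theorem with the escape point $\omega_+$. Since $(X,d)$ is $CAT(-1)$ it is $\delta$-hyperbolic for some $\delta\ge 0$; fix a basepoint $x$ for now (the passage to all basepoints is handled at the end). By the proposition above, the finite first moment hypothesis furnishes the drift $L>0$, so that $\tfrac1n d(x,\omega_n x)\to L$ and $\omega_n x\to\omega_+\in\partial X$ for $P$-a.e. sample path $\omega=(\omega_n)$.

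First I would invoke the subadditive cocycle theorem of Karlsson--Ledrappier for the cocycle $a_n(\omega)=d(x,\omega_n x)$ over the shift $\sigma$ on $(G^{\mathbb N},\mu^{\mathbb N})$: there is a measurable assignment $\omega\mapsto h_\omega$ of a point of the horofunction compactification of $X$ such that $\tfrac1n h_\omega(\omega_n x)\to -L$ for $P$-a.e.\ $\omega$. The content of this step is a Fekete-type extraction: for each $n$ one picks a horofunction $h_{\omega,n}$ with $h_{\omega,n}(\omega_n x)\le -d(x,\omega_n x)+O(\delta)$ (available in a hyperbolic space, essentially the Busemann function of the endpoint of a ray from $x$ through $\omega_n x$), and then extracts a measurable limit $h_\omega$ realising the optimal escape rate $-L$, using compactness of the horofunction compactification together with the subadditivity relation $a_{m+n}(\omega)\le a_m(\omega)+a_n(\sigma^m\omega)$. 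Because $X$ is Gromov hyperbolic, $h_\omega$ lies within distance $C=C(\delta)$ of the Busemann cocycle $b_{\xi(\omega)}(\cdot,\cdot)$ of a boundary point $\xi(\omega)\in\partial X$, so that $\tfrac1n b_{\xi(\omega)}(\omega_n x,x)\to -L$.

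Next I would identify $\xi(\omega)$ with $\omega_+$. From the standard comparison $(\omega_n x\mid\xi(\omega))_x=\tfrac12\big(d(x,\omega_n x)-b_{\xi(\omega)}(\omega_n x,x)\big)+O(\delta)$ one gets $\tfrac1n(\omega_n x\mid\xi(\omega))_x\to L>0$, hence $(\omega_n x\mid\xi(\omega))_x\to\infty$, which by definition of convergence in $X\cup\partial X$ means $\omega_n x\to\xi(\omega)$; since also $\omega_n x\to\omega_+$, uniqueness of limits gives $\xi(\omega)=\omega_+$ for $P$-a.e.\ $\omega$, and therefore $\tfrac1n b_{\omega_+}(\omega_n x,x)\to -L$. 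To conclude, recall that in a $\delta$-hyperbolic space the nearest-point projection of $\omega_n x$ onto the ray $\gamma_{x,\omega_+}$ occurs at arclength parameter $(\omega_n x\mid\omega_+)_x+O(\delta)$, so
\[
d(\omega_n x,\gamma_{x,\omega_+})\ \le\ d(x,\omega_n x)-(\omega_n x\mid\omega_+)_x+O(\delta)\ =\ \tfrac12\big(d(x,\omega_n x)+b_{\omega_+}(\omega_n x,x)\big)+O(\delta).
\]
Dividing by $n$ and letting $n\to\infty$ yields $\tfrac12\big(L+(-L)\big)=0$, which is the assertion for the fixed basepoint $x$. For an arbitrary $x'$ one uses that $\omega_+$ is independent of the basepoint, that $d(\omega_n x',\omega_n x)=d(x',x)$, and that $\gamma_{x',\omega_+}$ and $\gamma_{x,\omega_+}$ are asymptotic, hence at Hausdorff distance $\le d(x,x')+O(\delta)$; thus $d(\omega_n x',\gamma_{x',\omega_+})$ and $d(\omega_n x,\gamma_{x,\omega_+})$ differ by a quantity bounded in $n$, and running the argument along a countable dense set of basepoints produces a single null set valid for every $x$.

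The main obstacle is the first step: the existence of a compensating function attaining the full rate $-L$. A direct attempt using only the drift and the boundary convergence breaks down, since expanding $(\omega_n x\mid\omega_m x)_x=\tfrac12\big(d(x,\omega_n x)+d(x,\omega_m x)-d(\omega_n x,\omega_m x)\big)$ and letting $m\to\infty$ does \emph{not} give $(\omega_n x\mid\omega_+)_x\approx Ln$: the sublinear errors in $d(x,\omega_m x)\approx Lm$ and in $d(\omega_n x,\omega_m x)\approx L(m-n)$ (the latter obtained from stationarity of the increments applied to the shifted walk $h_{n+1},h_{n+2},\dots$) need not cancel as $m\to\infty$. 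Overcoming this is precisely what the subadditive/Fekete machinery accomplishes; once it is in place, the hyperbolicity of $X$ makes both the identification of the limiting horofunction with $\omega_+$ and the final distance estimate routine.
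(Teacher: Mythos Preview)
The paper does not prove this proposition at all: it is stated with a bare citation to Tiozzo \cite{Tiozzo} and used as a black box. Your sketch is essentially the argument of that reference, namely the Karlsson--Ledrappier horofunction method combined with the fact that in a Gromov hyperbolic space every horofunction is a bounded perturbation of a Busemann cocycle, and it is correct as outlined. One minor remark: your final basepoint argument is stronger than you state --- since changing $x$ to $x'$ alters $d(\omega_n x,\gamma_{x,\omega_+})$ by at most a constant depending only on $d(x,x')$ and $\delta$, the single null set obtained for one basepoint already works for every $x\in X$, and no countable-dense-set trick is needed.
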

Moreover, Dahmani-Horbez \cite{Dahmani-Horbez} proved (again in the more general setting of Gromov hyperbolic spaces) that a typical sample path eventually lands on loxodromic elements which pass close to the basepoint.

\begin{proposition}
Let $G\curvearrowright X$ be a nonelementary discrete action on a $CAT(-1)$ space and $\mu$ a  measure with finite second moment on $G$ whose support generates $G$ as a semigroup.
Then: 

a)For $P$-a.e. $\omega \in G^{\mathbb{N}}$ $$L=\lim_{n\to \infty}\frac{l(\omega_{n})}{n}.$$  In particular $\omega_{n}$ is loxodromic for large enough $n$. 

b) For any $c>0$, for all $\epsilon>0$ and for $P$-a.e. sample path $\omega$
there is an $n_{0}\in \mathbb{N}$
such that for all $n\geq n_0$, $\gamma(\epsilon Ln, (1-\epsilon)Ln)$ is contained in the $c$ neighborhood in $T^{1}X$ of the
axis of $\omega_n$ where $\gamma$ is the unit speed geodesic from a basepoint $o\in X$ to $\omega_{+}\in \partial X$.
\end{proposition}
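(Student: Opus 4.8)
The plan is to separate a deterministic $CAT(-1)$/Gromov‑hyperbolic ``loxodromic detection'' principle from the probabilistic inputs. Write $o=x_0$ and fix a hyperbolicity constant $\delta$ for $X$. The detection principle I would use is standard: there is $c_0=c_0(\delta)$ so that if $g$ is an isometry of $X$ with $(g^{-1}o\mid go)_o\le\tfrac12 d(o,go)-c_0$, then the bi‑infinite concatenation $\bigcup_{k\in\Z}[g^ko,g^{k+1}o]$ is a uniform local quasigeodesic, hence lies within bounded distance of a $g$‑invariant geodesic line; consequently $g$ is loxodromic, its axis $s_g$ meets the $c_0$‑ball about $o$, the sub‑ray of $s_g$ issuing from its point nearest $o$ toward the attracting fixed point $g^+$ fellow‑travels $[o,go]$ within a bounded distance, and $l(g)\ge d(o,go)-2(g^{-1}o\mid go)_o-c_0$.

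Given this, the real content is a single probabilistic statement: \emph{there is no linear backtracking}, i.e.\ $(\omega_n^{-1}o\mid\omega_n o)_o=o(n)$ for $P$‑a.e.\ $\omega$ --- equivalently, since $(\omega_n^{-1}o\mid\omega_n o)_o=d(o,\omega_n o)-\tfrac12 d(o,\omega_n^2 o)$, that $\tfrac1n d(o,\omega_n^2 o)\to 2L$. Granting it, part (a) follows at once: for a.e.\ $\omega$ and all large $n$ the detection hypothesis holds (its left side is $o(n)$ while $\tfrac12 d(o,\omega_n o)-c_0=\tfrac12 Ln-o(n)\to\infty$), so $\omega_n$ is loxodromic and $l(\omega_n)\ge d(o,\omega_n o)-2(\omega_n^{-1}o\mid\omega_n o)_o-c_0=Ln-o(n)$; combined with $l(\omega_n)\le d(o,\omega_n o)=Ln+o(n)$ this gives $l(\omega_n)/n\to L$.

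For part (b), fix $c>0$ and $\epsilon>0$. By the detection principle the sub‑ray of $s_{\omega_n}$ toward $\omega_n^+$ fellow‑travels $[o,\omega_n o]$ (which has length $Ln+o(n)$) within a bounded distance; by Tiozzo's sublinear tracking (Proposition~\ref{sublinear}) the endpoint $\omega_n o$ lies within $o(n)$ of $\gamma=\gamma_{o,\omega_+}$, so, as $d(o,\omega_n o)=Ln+o(n)$, thin triangles force $[o,\omega_n o]$ to lie within $o(n)$ of $\gamma$ on the parameter range $[0,Ln-o(n)]\supseteq[\epsilon Ln,(1-\epsilon)Ln]$. Hence the relevant sub‑rays of $s_{\omega_n}$ and $\gamma$ emanate from points a bounded distance apart toward boundary points with $(\omega_+\mid\omega_n^+)_o\ge Ln-o(n)$; by the exponential convergence of geodesics in $CAT(-1)$ spaces, on $[\epsilon Ln,(1-\epsilon)Ln]$ these two geodesics and their directions --- hence their lifts to $T^{1}X$ --- stay within $C e^{-(\epsilon Ln-o(n))}<c$ once $n$ is large, which is the assertion.

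The main obstacle is precisely the no‑backtracking estimate. Its subtlety is that $(\omega_n^{-1})_n$ is \emph{not} a sample path of a random walk --- its increments are those of $\omega_n$ in reverse order --- so $\omega_n^{-1}o$ need not converge in $\partial X$ and one cannot just invoke boundary convergence for it. The route I would take is to note that time‑reversal identifies, for each fixed $n$, the law of the path $o,\omega_n^{-1}\omega_{n-1}o,\dots,\omega_n^{-1}o$ with that of a $\hat\mu$‑random walk of length $n$; the Maher--Tiozzo deviation estimates for the $\mu$‑ and $\hat\mu$‑walks, whose tails are summable enough under the finite second moment hypothesis (this is where that hypothesis enters), then make $P\big(d(o,\omega_n^2 o)<(2L-\eta)n\big)$ small for every $\eta>0$, and Borel--Cantelli gives $\tfrac1n d(o,\omega_n^2 o)\to 2L$ a.s. This ``a typical sample path eventually lands on a loxodromic whose axis passes near the basepoint'' is exactly what is established in \cite{Dahmani-Horbez}, which we invoke.
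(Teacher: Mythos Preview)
Your proposal is correct and follows essentially the same route as the paper: both rest on the Dahmani--Horbez results (your ``no linear backtracking'' estimate is precisely what you end up invoking from \cite{Dahmani-Horbez}, and the paper simply cites \cite[Theorem 0.2, Proposition 1.9]{Dahmani-Horbez} directly), and both pass from a fixed fellow-traveling constant to an arbitrarily small $c>0$ by the $CAT(-1)$ contraction of geodesics (you phrase it as exponential convergence, the paper as the comparison-geometry fact that $c_1$-fellow-traveling on $|t|<N$ implies $c_2$-fellow-traveling on $|t|<N-t_0$). The only difference is presentational: you unpack the detection-principle/no-backtracking mechanism that is hidden inside the citation, whereas the paper just quotes the result.
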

\begin{proof}

a)This is a special case of \cite[Theorem 0.2]{Dahmani-Horbez}

b)The corresponding statement for some $c>0$ is  a special case of \cite[Proposition 1.9]{Dahmani-Horbez}.
We then can obtain it for  arbitrarily small $c>0$ by using the following fact, which follows from comparison geometry: for any $c_{1}>c_{2}>0$ there is a $t_{0}>0$ such that whenever $\gamma_{1}$ and $\gamma_{2}$ are parametrized unit speed geodesics in a $CAT(-1)$
space such that $d(\gamma_{1}(t),\gamma_{2}(t))\leq c_{1}$ for all $|t|<N$ we have $d(\gamma_{1}(t),\gamma_{2}(t))\leq c_{2}$ for all $|t|<N-t_{0}$ 
\end{proof}

In this note, we are interested in the asymptotic behavior, as $n\to \infty$ of the closed geodesic corresponding to the loxodromic element $\omega_n$ along typical sample paths of the random walk.
In particular, we will show they equidistribute with respect to a certain measure on the unit tangent bundle of $M$. 

Let $\nu$ and $\hat{\nu}$ be respectively the $\mu$ and $\hat{\mu}$ stationary probability measures on $\partial X$ respectively.

The action $G\curvearrowright X\times X$ preserves the measure class of $\nu \times \nu$ and is ergodic with respect to it \cite{Kaistrip}.

We will make the following assumption about $G$ and $\mu$.
\begin{axiom}\label{harmonicinvariant}

There exists a $G$ invariant and geodesic flow invariant Radon measure $\tilde{m}$ on the unit tangent bundle 
$$T^{1}X=\partial^{2}X\times \mathbb{R},$$ in the measure class of $\hat{\nu} \times \nu \times Leb$ (where $Leb$ is the Lebesgue measure on $\mathbb{R}$) which projects to a probability measure $m$ on the unit tangent bundle $T^{1}M=T^{1}X/G$ of $M$. We call $m$ the harmonic invariant measure of $(G,\mu)$.
\end{axiom}

Axiom \ref{harmonicinvariant} is known to hold in the following cases.
\begin{itemize}
\item When $\mu$ is the discretization of Brownian motion on the universal cover of a compact manifold of negative curvature (Anderson-Schoen \cite{AS}).
\item When $G<Isom(X)$ is convex cocompact and $
\mu$ has finite support.
((Kaimanovich \cite[Theorem 3.1]{Kaierg}).
\item When $G<Isom(X)$ is geometrically finite and $\mu$ has finite support (Gekhtman-Gerasimov-Potyagailo-Yang \cite[Theorem 1.8]{GGPY}.) In this setting, when $G$ contains parabolic elements \cite[Theorem 1.5]{GGPY} implies that the harmonic invariant measure is singular to the Bowen-Margulis measure.
\end{itemize}

 Recall a family $\sigma_{n}$ of Borel probability measures on a metrizable space $Z$ weakly converges to a measure $\sigma$ on $Z$, if for every bounded continuous function $g$ on $Z$, $$\int g d\sigma_{n}\to \int g d\sigma.$$
By the Portmanteau theorem, this is equivalent to: for any Borel $A\subset Z$,
$$\lim \sup \sigma_{n}(A)\leq \sigma(\overline{A})$$
and to: for any Borel $A\subset Z$,
$$\lim \inf \sigma_{n}(A)\geq \sigma(int A)$$ where $int A$ is the interior of $A$.

The goal of this note is to prove the following. \begin{theorem}\label{randomequid}
Suppose $X$ is a proper $CAT(-1)$ space, $G<Isom(X)$ is a nonelementary discrete subgroup, and $\mu$ a measure on $G$ with finite second moment with respect to $(X,d)$ and satisfying Axiom \ref{harmonicinvariant}.
Then for $P$-almost every sample path $\omega\in G^{\mathbb{N}}$  the measures $D_{w_n}$
weakly converge to the harmonic invariant measure $m$ as $n\to \infty$.
\end{theorem}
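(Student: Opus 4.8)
The plan is to reduce, by way of the Dahmani--Horbez estimate recorded above, each closed geodesic measure $D_{\omega_n}$ to the normalized arclength along a long central segment of the geodesic ray from $o$ to $\omega_+$, and then to prove that such ray segments equidistribute to $m$. This latter point is where the real work lies: it combines ergodicity of the geodesic flow for $m$ with a Hopf-type saturation argument that transfers an $m$-a.e.\ statement (supplied by the Birkhoff ergodic theorem) to an $\nu$-a.e.\ statement about rays issuing from the \emph{fixed} basepoint $o$, whose natural measure is singular to $\tilde m$.

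\smallskip\noindent\emph{Step 1: ergodicity of $(g_t,m)$, and Step 2: $\nu$-a.e.\ ray equidistributes.} Work in the Hopf coordinates $T^1X=\partial^2X\times\mathbb R$. Since $\tilde m$ is invariant under $g_t$ (which translates the $\mathbb R$-factor) and lies in the class of $\hat\nu\times\nu\times\mathrm{Leb}$, Fubini forces $\tilde m=\mu_0\times\mathrm{Leb}$ with $\mu_0$ a $\sigma$-finite measure in the class of $\hat\nu\times\nu$, and a $g_t$-invariant measurable set agrees, modulo $\tilde m$-null sets, with $A_0\times\mathbb R$ for some measurable $A_0\subseteq\partial^2X$. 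If such a set is moreover $G$-invariant then $A_0$ is $G$-invariant, so by ergodicity of $G\curvearrowright(\partial^2X,\hat\nu\times\nu)$ \cite{Kaistrip} it is $\mu_0$-null or $\mu_0$-conull; hence $(T^1M,g_t,m)$ is an ergodic, probability-measure-preserving flow, and Birkhoff's theorem applies. Now fix a countable family $\{f_j\}\subset C_b(T^1M)$ of bounded Lipschitz functions that determines weak convergence. By Birkhoff, the set $E\subseteq T^1M$ of vectors with equidistributing forward geodesic orbit is $m$-conull, so its preimage $\tilde E\subseteq T^1X$ is $\tilde m$-conull ($m$ is the measure $\tilde m$ descends to, so $m$-null sets have $\tilde m$-null preimages). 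If $v,w\in T^1X$ have a common forward endpoint, their forward rays are strongly asymptotic, hence exponentially close in $T^1X$ after a bounded time-shift, and since each $f_j$ is bounded and Lipschitz the forward Ces\`aro averages of $f_j$ along the images of $v$ and $w$ in $T^1M$ have the same limit. Therefore $\tilde E\subseteq p^{-1}(B)$, where $p:T^1X\to\partial X$ is the forward-endpoint map and $B=\{\xi\in\partial X:\gamma_{o,\xi}\ \text{equidistributes to}\ m\}$; hence $\tilde m(p^{-1}(B^c))=0$, and reading this off $\tilde m=\mu_0\times\mathrm{Leb}$ (using $\mathrm{Leb}(\mathbb R)=\infty$) gives $\mu_0(\{\xi_+\in B^c\})=0$, whence $\nu(B^c)=0$ because the $\xi_+$-marginal of $\mu_0$ is equivalent to $\nu$. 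Since $bnd_*P=\nu$, for $P$-a.e.\ $\omega$ the ray $\gamma$ from $o$ to $\omega_+$ equidistributes to $m$; subtracting the normalized averages over $[0,\epsilon Ln]$ and $[0,(1-\epsilon)Ln]$ then shows that, for each fixed $\epsilon\in(0,1/2)$, the normalized arclength measures $\tau_n$ on $\gamma(\epsilon Ln,(1-\epsilon)Ln)$ converge weakly to $m$ as $n\to\infty$.

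\smallskip\noindent\emph{Step 3: passing to $D_{\omega_n}$.} Fix $\epsilon\in(0,1/2)$ and $c>0$. By the Dahmani--Horbez proposition \cite{Dahmani-Horbez}, for $P$-a.e.\ $\omega$ and all large $n$ the segment $\gamma(\epsilon Ln,(1-\epsilon)Ln)$ lies in the $c$-neighbourhood in $T^1X$ of the axis of $\omega_n$. By $CAT(-1)$ comparison, a segment of a geodesic that remains within $c$ of a second geodesic over a long interval is $O(c)$-close, in a synchronized way, to a unit-speed reparametrization of that geodesic (the nearest-point projection differs by $O(c)$ from a $\pm1$-slope affine map), with orientations pinned down by $\omega_no\to\omega_+$ and $l(\omega_n)/n\to L$. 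Projecting to $T^1M$ and using $l(\omega_n)=Ln(1+o(1))$, the arc underlying $\tau_n$ is $O(c)$-synchronized with a sub-arc of the closed geodesic $\gamma_{\omega_n}$ covering a fraction $(1-2\epsilon)(1+o(1))$ of it, so that for every bounded Lipschitz $f$
\[
\Bigl|\int f\,dD_{\omega_n}-\int f\,d\tau_n\Bigr|\ \le\ C\bigl(\|f\|_\infty\,\epsilon+\mathrm{Lip}(f)\,c\bigr)+o(1)\qquad(n\to\infty).
\]
Combining this with Step 2 and letting $\epsilon\to 0$ and $c\to 0$ along sequences --- intersecting the countably many full-$P$-measure events involved --- yields $\int f_j\,dD_{\omega_n}\to\int f_j\,dm$ for every $j$, that is, $D_{\omega_n}\to m$ weakly for $P$-a.e.\ $\omega$.

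\smallskip The principal obstacle is the Hopf argument in Step 2: Birkhoff's theorem only gives equidistribution of $m$-typical vectors, whereas the relevant rays emanate from the fixed point $o$; the passage is legitimate precisely because Axiom \ref{harmonicinvariant} places $\tilde m$ in the class of $\hat\nu\times\nu\times\mathrm{Leb}$, so that the forward-endpoint marginal of $\tilde m$ is equivalent to $\nu$, and because of the ergodicity established in Step 1. Step 3 is $CAT(-1)$ bookkeeping, the only point needing care being the compatibility of orientations between the ray toward $\omega_+$ and the axis of $\omega_n$, together with the routine (and metric-dependent) verification that the synchronization estimate survives passage from $(X,d)$ to $(T^1X,\tilde d_T)$ and then to $T^1M$.
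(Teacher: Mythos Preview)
Your proof is correct and follows essentially the same approach as the paper: ergodicity of $(g_t,m)$, Birkhoff, a Hopf-type transfer via asymptoticity of rays with common forward endpoint (using that the $\xi_+$-marginal of $\tilde m$ is equivalent to $\nu$) to pass from $\tilde m$-a.e.\ vectors to $\nu$-a.e.\ rays from $o$, and then the Dahmani--Horbez estimate together with $l(\omega_n)/n\to L$ to reach the axes. The paper phrases the argument with indicator functions of Borel sets and $r$-neighbourhoods/$r$-interiors (concluding via Portmanteau) rather than with a countable family of Lipschitz test functions, and inserts an intermediate comparison with $\gamma_{o,\omega_n o}$ via sublinear tracking that you bypass, but these differences are cosmetic.
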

\section{Proof of Theorem 
\ref{harmonicinvariant}}

For a Borel subset $A\subset T^{1}X/G$ let $\tilde{A}$ be the $G$ invariant preimage of $A$ in $T^{1}X$.
Let $N_{r}A$ and $I_{r}A$ the $r$ neighborhood and $r$ interior of $A$ respectively, with respect to the metric $d_{T}$.
The measure $\hat{\nu} \times \nu$ is known to be ergodic with respect to the $G$ action, and hence $m$ is ergodic
with respect to the geodesic flow (see \cite{Kaierg}). 

Thus the Birkhoff ergodic theorem
implies
$$\frac{1}{T}|\{t\in [0,T]:g_{t}q\in A\}|\to m(A)$$ for $m$-a.e. $q\in T^{1}M$.

Consequently,

$$\frac{1}{T}|\{t\in [0,T]:\tilde{g}_{t}q\in \tilde{A}\}|\to m(A)$$ for $\tilde{m}$-a.e. $q\in T^{1}X$.
Consequently, since geodesics converging to the same boundary point are asymptotic, we have that for any $r>0$, 
for $P$ a.e. $\omega \in G^{\mathbb{N}}$, any limit point as $T\to \infty$ of 

$$\frac{1}{T}|\{t\in [0,T]:\gamma_{o,\omega_{+}}(t)\in \tilde{A}\}|$$ is bounded between $m(I_{r}A)$ and $m(N_{r}A)$.
In particular for large $n$ we have 
$$m(I_{2r}A)\leq \frac{1}{Ln}|\{t\in [0,Ln]:\gamma_{o,\omega_{+}}(t)\in \tilde{A}\}|\leq m(N_{2r}A)$$ and thus
$$m(I_{2r}A)-2r\leq \frac{1}{Ln}|\{t\in [rLn,(1-r)Ln]:\gamma_{o,\omega_{+}}(t)\in \tilde{A}\}|\leq m(N_{2r}A)$$

By Proposition \ref{sublinear}, $d(\omega_{n}o,\gamma_{o,\omega_{+}}(Ln))/n\to 0$, so since $X$ is $CAT(-1)$ we have for all large enough $n$:

$$d(\gamma_{o,\omega_{+}}(t), \gamma_{o,\omega_{n}}(t))\leq r$$ for all $0\leq t\leq (1-r)Ln.$
Thus, 
$$m(I_{3r}A)-2r\leq \frac{1}{Ln}|\{t\in [rLn,(1-r)Ln]:\gamma_{o,\omega_{+}}(t)\in \tilde{A}\}|\leq m(N_{3r}A)$$

Thus, for a suitable unit speed parametrization of the axis $s=s_{\omega_n}$ of $\omega_n$, Proposition \ref{randomequid} implies for large enough $n$:

$$m(I_{3r}A)-3r\leq \frac{1}{Ln}|\{t\in [rLn+r,(1-r)Ln-r]:s(t)\in \tilde{A}\}|\leq m(N_{3r}A)$$
and thus 
$$m(I_{3r}A)-3r\leq \frac{1}{Ln}|\{t\in [0,Ln]:s(t)\in \tilde{A}\}|\leq m(N_{3r}A)+3r.$$

Since $l(\omega_{n})/n\to L$ this implies for large enough $n$:

$$m(I_{3r}A)-3r\leq \frac{1}{l(\omega_{n})}|\gamma_{\omega_{n}}\cap A|\leq m(N_{3r}A)+3r.$$

Letting $r\to 0$ gives 

$$m(int(A))\leq \lim \inf_{n\to \infty} \frac{1}{l(\omega_{n})}|\gamma_{\omega_{n}}\cap A|$$ 
and 
$$\lim \sup_{n\to \infty} \frac{1}{l(\omega_{n})}|\gamma_{\omega_{n}}\cap A|\leq m(\overline{A})$$ 
completing the proof.

\section{Acknowledgments}
I would like to thank Ursula Hamenstaedt, Vadim Kaimanovich, and Francois Leddrapier for very useful conversations.


\begin{thebibliography}{9}

\bibitem{AS} M. Anderson and R. Schoen. Positive harmonic functions on complete manifolds of negative curvature, Ann.
of Math. 121 (1985), no. 3, 429–461.


\bibitem{Dahmani-Horbez}
Spectral theorems for random walks on mapping class groups and $Out(F_N)$. 
To appear in IMRN, arXiv:1506.06790.


\bibitem{GGPY}
I. Gekhtman, V. Gerasimov, L. Potyagailo, and W. Yang, Martin boundary covers Floyd boundary,
https://arxiv.org/abs/1708.02133


\bibitem{Kaierg}
V. Kaimanovich, Ergodicity of harmonic invariant measures for the geodesic flow on hyperbolic
spaces, Journal für die reine und angewandte Mathematik (1994)
Volume: 455, page 57-104.

\bibitem{Kaistrip}
V. Kaimanovich, The Poisson formula for groups with hyperbolic properties,
Ann. of Math. (2) 152 (2000), no. 3, {659-692.}

\bibitem{MT}
J. Maher and G. Tiozzo, Random walks on weakly hyperbolic groups, to appear in Journal für die reine und angewandte Mathematik. Preprint available at
https://arxiv.org/abs/1410.4173

\bibitem{Roblin} T. Roblin. Ergodicité et équidistribution en courbure négative. Mémoire Soc. Math. France, 95 (2003).

\bibitem{Tiozzo} Sublinear deviation between geodesics and sample paths, Duke Mathematical Journal 164 (2015), no. 3, 511-539.

\end{thebibliography}
\end{document}